\newtheorem{theorem}{Theorem}[section]
\newtheorem{lemma}[theorem]{Lemma}
\theoremstyle{definition}
\newtheorem{remark}[theorem]{Remark}
\newtheorem{example}[theorem]{Example}
\title{Multigraded Betti numbers of simplicial forests }
\author{Nursel Erey and Sara Faridi}
\begin{document}
\date{}
\maketitle


\begin{abstract}
We prove that multigraded Betti numbers of a simplicial forest are
always either $0$ or $1$. Moreover a nonzero multidegree appears
exactly in one homological degree in the resolution. Our work
generalizes work of Bouchat~\cite{bouchat} on edge ideals of graph trees.

\end{abstract}

\bigskip

\section{Introduction}
The Betti numbers of edge ideals of graph forests were studied by
several authors (\cite{bouchat}, \cite{ha van
  tuyl}, \cite{Jacques}, \cite{jacques katzman
  forests}, \cite{kimura}). Kimura~\cite{kimura}
combinatorially characterized the graded Betti numbers for a graph
forest. In~\cite{bouchat} Bouchat proved that multigraded Betti
numbers of graph trees are always $0$ or $1$ by using the mapping cone
construction. Ehrenborg and Hetyei~\cite{ehrenborg} showed that the
independence complex of graph forests are simple-homotopy equivalent
to a single vertex or to a sphere. By the well-known formula of
Hochster this implies that multigraded Betti numbers of graph forests
appear in at most one homological degree. We shall generalize these
results about multigraded Betti numbers to simplicial forests.

Note that multigraded Betti numbers of edge ideals are not necessarily
$0$ or $1$ in general. Also a multidegree can appear in more than one
homological degree, see the example below.

\begin{example} For $I= ( ab, ae, be, cd, ce, de)$ one can check with 
Macaulay$2$~\cite{m2} that $b_{1,abe}(I)=b_{1,cde}(I)=2$ and $b_{2,abcde}(I)=b_{3,abcde}(I)=1$. 
\end{example}

\section{Background material}
\subsection{Resolutions}
Let $S=\Bbbk[x_1,\ldots,x_n]$ be the polynomial ring in $n$ variables
over a field $\Bbbk$. A \textbf{minimal free resolution} of a monomial
ideal $I$ is an exact sequence of free $S$-modules
$$ 0 \longrightarrow F_r \overset{d_r}{\longrightarrow} \cdots\longrightarrow F_1 \overset{d_1}{\longrightarrow} F_0  \overset{d_0}{\longrightarrow} I \longrightarrow 0   $$
such that $d_{i+1}(F_{i+1})\subseteq (x_1,\ldots,x_n)F_i$ for all $i\geq 0$. The rank of $F_i$ is called the $i$th \textbf{total Betti number} of $I$ and is denoted by $b_{i}^S(I)$. Moreover if the differential maps preserve the (standard) degrees, then the resolution is called a \textbf{minimal graded free resolution}. In this case the resolution is of the form

$$ 0 \longrightarrow \bigoplus_{j \in \mathbb{N}} S(-j)^{b_{r,j}^S(I)} \overset{d_r}{\longrightarrow} \cdots\longrightarrow \bigoplus_{j \in \mathbb{N}} S(-j)^{b_{1,j}^S(I)} \overset{d_1}{\longrightarrow} \bigoplus_{j \in \mathbb{N}} S(-j)^{b_{0,j}^S(I)} \overset{d_0}{\longrightarrow} I \longrightarrow 0   $$ where the integers $b_{i,j}^S(I)$ are the \textbf{graded Betti numbers} of $I$.

One usually also considers $\mathbb{N}^n$-grading (multigrading) on $S$ where $\mathbb{N}=\{0, 1, 2,\ldots\}$. Note that with this grading the degree of a monomial $m=x_1^{a_1}x_2^{a_2}\cdots x_n^{a_n}$ is equal to $\bold{m}=(a_1,\ldots,a_n).$ If the differential maps of a minimal free resolution preserve the multidegrees, then it takes the following form:
$$ 0 \longrightarrow \bigoplus_{\bold{m} \in \mathbb{N}^n} S(-\bold{m})^{b_{r,\bold{m}}^S(I)} \overset{d_r}{\longrightarrow} \cdots\longrightarrow \bigoplus_{\bold{m} \in \mathbb{N}^n} S(-\bold{m})^{b_{1,\bold{m}}^S(I)} \overset{d_1}{\longrightarrow} \bigoplus_{\bold{m} \in \mathbb{N}^n} S(-\bold{m})^{b_{0,\bold{m}}^S(I)} \overset{d_0}{\longrightarrow} I \longrightarrow 0   $$
 which is called a \textbf{minimal multigraded free resolution}. The associated ranks $b_{i,\bold{m}}^S(I)$ are called \textbf{multigraded Betti numbers} of $I$.

Clearly the Betti numbers are related with the following equations.
$$ b_{i}^S(I)=\sum_{j \in \mathbb{N}}b_{i,j}^S(I) $$
\begin{equation}\label{definition of graded betti number} 
b_{i,j}^S(I)=\sum_{\deg(m)=j}b_{i,\bold{m}}^S(I) 
\end{equation} 
where $\deg(m)$ stands for the standard degree of $m$, i.e.,
$\deg(x_1^{a_1}\cdots x_n^{a_n})=a_1+\cdots+a_n.$ For simplicity, we shall
use a monomial $m$ and its $\mathbb{N}^n$-degree $\bold{m}$
interchangeably.

\subsection{Simplicial complexes and homology}
An \textbf{abstract simplicial complex} $\Gamma$ on a set of
\textbf{vertices} $\mathcal{V}(\Gamma)=\{x_1,\ldots,x_n\}$ is a
collection of subsets of $\mathcal{V}(\Gamma)$ such that $\{x_i\}\in
\Gamma$ for all $i$ and, $F\in \Gamma$ implies that all subsets of $F$
are also in $\Gamma .$ The elements of $\Gamma$ are called
\textbf{faces} and the maximal faces under inclusion are called
\textbf{facets}.

Since the simplicial complex $\Gamma$ is determined by its facets
$F_1,\ldots,F_q$ we say that  $F_1,\ldots,F_q$ \textbf{generate} $\Gamma$
and, write $ \Gamma= \langle F_1,\ldots,F_q\rangle $ or
$\operatorname{Facets}(\Gamma)=\{F_1,\ldots,F_q\}$.  A
\textbf{subcollection} of $\Gamma$ is a subcomplex generated by a
subset of the facets of $\Gamma$. The simplicial complex obtained by
\textbf{removing the facet} $F_i$ from $\Gamma$ is the simplicial
complex $\Gamma \setminus \langle F_i \rangle = \langle
F_1,\ldots,\widehat{F_i},\ldots,F_q \rangle $. If $A$ is a subset of
$\mathcal{V}(\Gamma)$, \textbf{the induced subcollection on $A$} is
defined as $\Gamma_{A}=\langle F\in \operatorname{Facets}(\Gamma) \mid
F\subseteq A \rangle$.

Two facets $F$ and $G$ of $\Gamma$ are \textbf{connected} if there
exists a chain of facets of $\Gamma$, $F_0=F,F_1,\ldots,F_m=G$ such that
every two consecutive facets have nonempty intersection. The simplicial
complex $\Gamma$ is called \textbf{connected} if any two of its facets
are connected.

Let $F$ be a facet of $\Gamma$. The \textbf{connected component} of
$F$ in $\Gamma$ is denoted by $\operatorname{conn}_{\Gamma}(F)$. If
$\operatorname{conn}_{\Gamma}(F)\setminus \langle F \rangle= \langle F_1,\ldots,F_p
\rangle $, then the \textbf{the reduced connected component of} $F$ in
$\Gamma$ denoted by $\operatorname{\overline{conn}}_{\Gamma}(F)$ will
be the simplicial complex
$$\operatorname{\overline{conn}}_{\Gamma}(F)= \langle F_i\setminus F
 \mid (F_j \setminus F) \nsubseteq (F_i\setminus F), \ 
j\neq i, \ 1\leq i, j \leq p \rangle. $$ In other words, the facets of
$\operatorname{\overline{conn}}_{\Gamma}(F)$ are the minimal nonempty
sets among all sets $G \setminus F$, where $G$ is a facet of $\operatorname{conn}_{\Gamma}(F)$.

A facet $F$ of $\Gamma$ is a \textbf{leaf} if either $F$ is the only
facet of $\Gamma$, or there exists a facet $G\in \Gamma$ such that
$G \neq F$ and $F\cap F' \subseteq G$ for every facet $F'\neq F$. By definition,
every leaf $F$ of $\Gamma$ contains a vertex $v$ such that $v \notin
F'$ for every facet $F'\neq F$ of $\Gamma$. Such a vertex is called a
\textbf{free vertex}. A connected simplicial complex $\Gamma$ is a
\textbf{tree} if every nonempty subcollection of $\Gamma$ has a
leaf. We say $\Gamma$ is a \textbf{forest} if every connected
component of $\Gamma$ is a tree.

The \textbf{facet ideal} $\mathcal{F}(\Gamma)$ of $\Gamma$ is the monomial ideal in $\Bbbk[x_1,\ldots,x_n]$ which is generated by $\{\prod_{x\in F}x \mid F \text{ is a facet of } \Gamma \}.$ Using the following correspondence 
$$m= x_{i_1}\cdots x_{i_s}, \text{ a squarefree monomial} \Leftrightarrow A=\{x_{i_1},\ldots,x_{i_s}\}\subseteq \mathcal{V}(\Gamma)$$ we shall use the squarefree monomials and nonempty subsets of $\mathcal{V}(\Gamma)$ interchangeably.

A \textbf{simplex} is a simplicial complex with only one nonempty facet. For each integer $ i $, the $ \Bbbk $ -vector space $ \widetilde{H}_i(\Gamma,\Bbbk) $ is the $i$th \textbf{reduced homology} of $ \Gamma $ over $ \Bbbk $.


\subsection{The Taylor complex}
Let $I$ be an ideal in $S=\Bbbk[x_1,\ldots,x_n]$ which is minimally
generated by the monomials $m_1,\ldots,m_s$. In~\cite{taylor}, Taylor
constructed an explicit multigraded free resolution of $I$ which is
usually nonminimal.  This construction was generalized then to
simplicial resolutions in~\cite{bayer peeva sturmfels}. Taylor's
resolution is an example of a simplicial resolution where the
underlying simplicial complex is a full simplex over the vertex set
labeled with $\{m_1,\ldots,m_s\}$, called the \textbf{Taylor simplex} of
$I$. The Betti numbers of $I$ can be determined by the dimensions of
reduced homologies of certain subcomplexes of the Taylor
simplex. Before stating this precisely we need one more definition.

Let $\Theta$ be the Taylor simplex whose vertices are labeled with
monomials $m_1,\ldots,m_s$. If $\tau=\{m_{i_1},\ldots,m_{i_r}\}$ is a face
of $\Theta$, then by $\operatorname{lcm}(\tau)$ we mean
$\operatorname{lcm}(m_{i_1},\ldots,m_{i_r})$.  For any monomial $m$ in
$S$ the simplicial subcomplex $\Theta_{<m}$ is defined as
$$ \Theta_{<m}= \{\tau \in \Theta \mid \operatorname{lcm} (\tau)
\text{ strictly divides } m\}.  $$

\begin{example}\label{example of Taylor simplex} 
For $I=(x_1x_2, x_1x_3, x_1x_4, x_3x_4) $ the Taylor
simplex $\Theta$ and a subcomplex $\Theta_{<x_1x_2x_3x_4}$ are
illustrated in Figures~\ref{fig:minipage1} and~\ref{fig:minipage2}
respectively.
\end{example}

\begin{figure}[ht]
\begin{minipage}[b]{0.45\linewidth}
\quad
\quad
\quad
\begin{tikzpicture}
[scale=1, vertices/.style={draw, fill=black, circle, inner sep=0.5pt}]
\filldraw[fill=black!10, draw=black] (2,0.5) -- (3,0) -- (2.9,1.5) -- cycle; 
\filldraw[fill=black!10, draw=black] (3,0) -- (2.9,1.5) -- (3.8,0.8) -- cycle;
\node[vertices, label=below:{$x_3x_4$}] (a) at (3,0) {};
\node[vertices, label=right:{$x_1x_4$}] (b) at (3.8,0.8) {};
\node[vertices, label=left:{$x_1x_3$}] (c) at (2,0.5) {};
\node[vertices, label=above:{$x_1x_2$}] (d) at (2.9,1.5) {};
\foreach \to/\from in {a/b,a/c,a/d,b/d,c/d}
\draw [-] (\to)--(\from);
\draw [dashed] (b)--(c);
\end{tikzpicture}
\caption{A Taylor simplex $\Theta$}
\label{fig:minipage1}
\end{minipage}
\quad 
\begin{minipage}[b]{0.45\linewidth}
\quad
\quad
\quad
\begin{tikzpicture}
[scale=1, vertices/.style={draw, fill=black, circle, inner sep=0.5pt}]
\filldraw[fill=black!10, draw=black] (-1,0.5) -- (0,0) -- (0.8,0.8) -- cycle; 

\node[vertices, label=below:{$x_3x_4$}] (a) at (0,0) {};
\node[vertices, label=right:{$x_1x_4$}] (b) at (0.8,0.8) {};
\node[vertices, label=left:{$x_1x_3$}] (c) at (-1,0.5) {};
\node[vertices, label=above:{$x_1x_2$}] (d) at (-0.1,1.5) {};
\foreach \to/\from in {a/b,a/c,b/d,c/d}
\draw [-] (\to)--(\from);
\draw [dashed] (b)--(c);
\end{tikzpicture}
\caption{$\Theta_{<x_1x_2x_3x_4}$}
\label{fig:minipage2}
\end{minipage}
\end{figure}

\begin{theorem}[\cite{bayer peeva sturmfels}]\label{betti number by homology} 
Let I be a proper monomial ideal of $S$ which is minimally generated by the
monomials $m_1,\ldots,m_s$.  Denote by $\Theta$ the Taylor simplex of
$I$. For $i \geq 0$, the multigraded Betti numbers of $I$ are given by
 \begin{equation}
     b_{i,m}^S (I)=
    \begin{cases}
      \dim_{\Bbbk} \widetilde{H}_{i-1}(\Theta_{<m};\Bbbk), &
      \text{if}\ m \text{ divides
      }\ \operatorname{lcm}(m_1,\ldots,m_s)\\ 0, & \text{otherwise.}
    \end{cases}
  \end{equation}
\end{theorem}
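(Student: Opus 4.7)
The plan is to compute $b_{i,\mathbf{m}}^S(I)$ by recognizing it as the dimension of a multigraded Tor module and to evaluate that Tor using the Taylor complex, which is structured exactly so as to produce the subcomplex $\Theta_{<\mathbf{m}}$ of the statement. The key isomorphism will come from the long exact sequence of a pair $(\Theta_{\leq \mathbf{m}}, \Theta_{<\mathbf{m}})$, where I set $\Theta_{\leq \mathbf{m}} = \{\tau \in \Theta : \mathrm{lcm}(\tau) \mid \mathbf{m}\}$; this auxiliary complex will turn out to be a full simplex, hence contractible.

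Concretely, I would first write $b_{i,\mathbf{m}}^S(I) = \dim_\Bbbk \mathrm{Tor}_i^S(I,\Bbbk)_\mathbf{m}$ and use the short exact sequence $0 \to I \to S \to S/I \to 0$ to reduce this (in positive multidegrees) to $\dim_\Bbbk \mathrm{Tor}_{i+1}^S(S/I,\Bbbk)_\mathbf{m}$. The Taylor complex $T_\bullet$ of $S/I$, with $T_j = \bigoplus_{|\tau|=j} S(-\mathrm{lcm}(\tau))\,e_\tau$ and differential sending $e_\tau \mapsto \sum_{m_j \in \tau}\pm\tfrac{\mathrm{lcm}(\tau)}{\mathrm{lcm}(\tau\setminus\{m_j\})}\,e_{\tau\setminus\{m_j\}}$, is a multigraded free resolution, so this Tor equals the homology of $(T_\bullet \otimes_S \Bbbk)_\mathbf{m}$. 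Because $S(-\mathbf{a})\otimes_S\Bbbk$ lives only in multidegree $\mathbf{a}$, the space $(T_j \otimes \Bbbk)_\mathbf{m}$ has $\Bbbk$-basis the faces $\tau$ with $|\tau|=j$ and $\mathrm{lcm}(\tau) = \mathbf{m}$, which is precisely the basis of the relative chain group $\tilde{C}_{j-1}(\Theta_{\leq \mathbf{m}},\Theta_{<\mathbf{m}};\Bbbk)$. A check of signs shows that the Taylor differential descends to the simplicial boundary, since the coefficient $\mathrm{lcm}(\tau)/\mathrm{lcm}(\tau\setminus\{m_j\})$ reduces to $\pm 1$ in $\Bbbk$ precisely when removing $m_j$ does not change the LCM (and to $0$ otherwise), which is exactly the condition for $\tau\setminus\{m_j\}$ to survive in the quotient $\Theta_{\leq\mathbf{m}}/\Theta_{<\mathbf{m}}$.

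The remaining step is geometric: the subcomplex $\Theta_{\leq \mathbf{m}}$ is the full simplex on $\{m_i : m_i \mid \mathbf{m}\}$, since any LCM of divisors of $\mathbf{m}$ still divides $\mathbf{m}$, so it is contractible whenever nonempty. Thus the long exact sequence of the pair collapses to $\tilde{H}_j(\Theta_{\leq \mathbf{m}},\Theta_{<\mathbf{m}};\Bbbk) \cong \tilde{H}_{j-1}(\Theta_{<\mathbf{m}};\Bbbk)$, and assembling the identifications yields $b_{i,\mathbf{m}}^S(I) = \dim_\Bbbk \tilde{H}_{i-1}(\Theta_{<\mathbf{m}};\Bbbk)$. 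The vanishing case $\mathbf{m}\nmid \mathrm{lcm}(m_1,\ldots,m_s)$ is automatic, because then no face satisfies $\mathrm{lcm}(\tau) = \mathbf{m}$, so $(T_\bullet\otimes\Bbbk)_\mathbf{m} = 0$. I expect the main obstacle to be the careful sign bookkeeping needed to identify the reduced Taylor differential with the simplicial boundary on the relative chain complex; this requires fixing an orientation on $\Theta$ and tracking how signs behave under reduction modulo the irrelevant ideal.
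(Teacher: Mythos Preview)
The paper does not supply a proof of this statement; it is quoted verbatim as a result of Bayer--Peeva--Sturmfels and used as a black box. Your argument is correct and is essentially the standard one underlying that reference: compute $\mathrm{Tor}$ via the Taylor resolution, observe that the multidegree-$\mathbf{m}$ strand of $T_\bullet\otimes_S\Bbbk$ is the relative chain complex of the pair $(\Theta_{\leq\mathbf{m}},\Theta_{<\mathbf{m}})$, and collapse the long exact sequence of the pair using contractibility of the full simplex $\Theta_{\leq\mathbf{m}}$. One small wording issue: the monomial coefficient $\mathrm{lcm}(\tau)/\mathrm{lcm}(\tau\setminus\{m_j\})$ reduces to $1$ (not $\pm 1$) in $\Bbbk$ when the lcm is unchanged, with the sign coming solely from the alternating sum in the Taylor differential; otherwise your identification with the simplicial boundary is exactly right, and the anticipated ``sign bookkeeping'' obstacle is not a genuine difficulty once an ordering of $m_1,\ldots,m_s$ is fixed.
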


\begin{remark}By Theorem~\ref{betti number by homology}, we are allowed not to 
specify a polynomial ring $S$ when we deal with Betti numbers. We can
think of a facet ideal $\mathcal{F}(\Gamma)$ lying in a polynomial
ring over $\Bbbk$ that contains at least as many variables as the
vertices of $\Gamma$. Therefore we drop $S$ and write
$b_{i,m}(\mathcal{F}(\Gamma))$ and $b_{i,j}(\mathcal{F}(\Gamma))$ for
the Betti numbers.
\end{remark}

\begin{remark}\label{r:top-betti} If $I=(m_1,\ldots,m_s)$ and 
$q=\deg\operatorname{lcm}(m_1,\ldots,m_s)$ then for any $r > q$ we have
  $b_{i,r}(I)=0$ for all $i$. That is, $q$ is the largest possible
  grade at which the Betti number can be nonzero. Therefore we call
  the numbers $b_{i,q}(I), i \in \mathbb{Z}$ as the \textbf{top grade
    Betti numbers}. Clearly, for a facet ideal $\mathcal{F}(\Gamma)$,
  the top grade is the number of vertices of $\Gamma$.
\end{remark}

\begin{remark}\label{monomial ideal generated by single element}If $m$ is 
  one of the minimal generators of I, then $b_ {i,m} (I) = 1$ when $i
  = 0$ and is zero otherwise. If $I$ is generated by a single monomial
  $m$, then its multigraded resolution is
$$ 0 \rightarrow S(-m) \rightarrow I \rightarrow 0.$$
\end{remark}


\section{Betti numbers of simplicial forests}

\begin{lemma}\label{multigraded betti number and induced subgraph} If $m$ is a squarefree monomial of degree $j$, then $b_{i,m}(\mathcal{F}(\Gamma))=b_{i,j}(\mathcal{F}(\Gamma_m))$.
\end{lemma}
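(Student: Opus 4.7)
The plan is to apply Theorem~\ref{betti number by homology} to both sides and reduce the statement to a comparison of reduced homology of subcomplexes of two Taylor simplices. Write $\Theta$ for the Taylor simplex of $\mathcal{F}(\Gamma)$ and $\Theta'$ for the Taylor simplex of $\mathcal{F}(\Gamma_m)$. The first step is to verify that $\Theta_{<m}=\Theta'_{<m}$. If $\tau\in\Theta_{<m}$ then $\operatorname{lcm}(\tau)\mid m$, so every facet $F$ of $\Gamma$ labelling a vertex of $\tau$ divides the squarefree monomial $m$; hence $F\subseteq\operatorname{supp}(m)$, making $F$ a facet of $\Gamma_m$. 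Conversely, every face of $\Theta'$ is a face of $\Theta$, and the condition that $\operatorname{lcm}(\tau)$ strictly divide $m$ is intrinsic to $\tau$. Consequently the two complexes coincide, and Theorem~\ref{betti number by homology} gives
$$ b_{i,m}(\mathcal{F}(\Gamma))=\dim_{\Bbbk}\widetilde{H}_{i-1}(\Theta'_{<m};\Bbbk). $$

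The second step is to unwrap the right-hand side. By equation~(\ref{definition of graded betti number}),
$$ b_{i,j}(\mathcal{F}(\Gamma_m))=\sum_{\deg m'=j}b_{i,m'}(\mathcal{F}(\Gamma_m)). $$
All generators of $\mathcal{F}(\Gamma_m)$ are squarefree monomials supported on $\mathcal{V}(\Gamma_m)$, so by Theorem~\ref{betti number by homology} only squarefree divisors $m'$ of $\prod_{v\in\mathcal{V}(\Gamma_m)}v$ can contribute. Since $\mathcal{V}(\Gamma_m)\subseteq\operatorname{supp}(m)$ has cardinality at most $j=|\operatorname{supp}(m)|$, two cases arise. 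If $\mathcal{V}(\Gamma_m)=\operatorname{supp}(m)$, the unique candidate is $m'=m$, so $b_{i,j}(\mathcal{F}(\Gamma_m))=b_{i,m}(\mathcal{F}(\Gamma_m))=\dim_{\Bbbk}\widetilde{H}_{i-1}(\Theta'_{<m};\Bbbk)$, which matches the first step. If instead $\mathcal{V}(\Gamma_m)\subsetneq\operatorname{supp}(m)$, then $j>|\mathcal{V}(\Gamma_m)|$ and the right-hand side vanishes by Remark~\ref{r:top-betti}; meanwhile every face of $\Theta'$ now has lcm supported strictly inside $\operatorname{supp}(m)$, so $\Theta'_{<m}=\Theta'$, which is a (possibly empty) simplex and therefore has trivial reduced homology, forcing the left-hand side to vanish as well.

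The main thing to be careful about is the second case, in particular the degenerate situation where $\Gamma_m$ has no facets at all; there one must recall that the $(-1)$-dimensional simplex $\{\emptyset\}$ still satisfies $\widetilde{H}_i=0$ for every $i$, so both sides remain zero. The conceptual content of the argument lies entirely in the identification $\Theta_{<m}=\Theta'_{<m}$, which expresses the combinatorial reason that restricting to the induced subcollection captures exactly the multigraded Betti number at the squarefree multidegree~$m$.
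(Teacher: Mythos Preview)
Your argument follows essentially the same route as the paper's: identify $\Theta_{<m}=\Theta'_{<m}$, invoke Theorem~\ref{betti number by homology}, and then use Equation~(\ref{definition of graded betti number}) together with the fact that $m$ is the only squarefree monomial of degree $j$ that can divide the lcm of the generators of $\mathcal{F}(\Gamma_m)$. You supply more detail than the paper does, in particular by spelling out the case $\mathcal{V}(\Gamma_m)\subsetneq\operatorname{supp}(m)$, which the paper's proof absorbs into a single sentence.

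One small correction in your degenerate case: the complex $\{\emptyset\}$ does \emph{not} have vanishing reduced homology in all degrees; rather $\widetilde{H}_{-1}(\{\emptyset\};\Bbbk)\cong\Bbbk$. So the homology argument fails for $i=0$ when $\Gamma_m$ has no facets. The conclusion is still correct, but the right justification is that when no facet of $\Gamma$ is contained in $\operatorname{supp}(m)$, no minimal generator of $\mathcal{F}(\Gamma)$ divides $m$, hence $m$ is not the lcm of any nonempty subset of generators and $b_{i,m}(\mathcal{F}(\Gamma))=0$ for all $i\geq 0$ directly (e.g.\ because the Taylor resolution has no summand in multidegree $m$). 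With that adjustment your proof is complete and matches the paper's.
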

\begin{proof}Let $\Theta$ and $\Lambda$ be Taylor simplices of 
$\mathcal{F}(\Gamma)$ and $\mathcal{F}(\Gamma_m)$ respectively. Then
  clearly we have $\Theta_{<m} = \Lambda_{<m}$. So by
  Theorem~\ref{betti number by homology}, $
  b_{i,m}(\mathcal{F}(\Gamma))=b_{i,m}(\mathcal{F}(\Gamma_m))$. But by
  Equation (\ref{definition of graded betti number}) we get
  $b_{i,m}(\mathcal{F}(\Gamma_m)) = b_{i,j}(\mathcal{F}(\Gamma_m))$
  since $m$ is the only possible squarefree monomial of degree $j$
  that can divide the $\operatorname{lcm}$ of the generators of
  $\mathcal{F}(\Gamma_m)$.
\end{proof}

\begin{lemma}\label{lemma betti numbers of connected components}If $I_1, I_2,\ldots,I_N$ are squarefree monomial ideals whose minimal generators contain no common variable, then for $i,j \geq 0$

\begin{equation} \label{betti numbers of connected components}
b_{i,j}\left(S/(I_1+I_2+\cdots+I_N)\right)= \sum_{\substack{u_1+\cdots+u_N=i
    \\ v_1+\cdots+v_N=j}} b_{u_1,v_1}(S/I_1)\cdots b_{u_N,v_N}(S/I_N).
\end{equation}
Moreover, if the least common multiple of the minimal generators of each $I_r$ is of degree $q_r$, then
\begin{equation}\label{top betti numbers of connected components}
b_{i, q_1+\cdots+q_N}\left(S/(I_1+I_2+\cdots+I_N)\right)=  \sum_{u_1+\cdots+u_N=i } b_{u_1,q_1}(S/I_1)\cdots b_{u_N,q_N}(S/I_N).
\end{equation}
\end{lemma}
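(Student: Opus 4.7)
The plan is to exploit the fact that disjoint variable supports make $S/(I_1+\cdots+I_N)$ decompose as a multigraded tensor product over $\Bbbk$, so that its minimal free resolution is obtained as the tensor product of the individual minimal free resolutions.

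First I would partition the variables of $S$ into the disjoint groups appearing in each $I_r$, together with any leftover variables, and write $S = S_1 \otimes_\Bbbk \cdots \otimes_\Bbbk S_N \otimes_\Bbbk S'$, where $S_r$ is the polynomial subring on the variables of $I_r$ and $S'$ absorbs the rest. Because the generators of different $I_r$ share no variables, the natural multigraded map yields an isomorphism
\[
S/(I_1+\cdots+I_N) \;\cong\; (S_1/I_1) \otimes_\Bbbk \cdots \otimes_\Bbbk (S_N/I_N) \otimes_\Bbbk S'.
\]

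Next, for each $r$ I would take the minimal multigraded free resolution $F^{(r)}_\bullet$ of $S_r/I_r$ over $S_r$, and form the tensor product complex $F^{(1)}_\bullet \otimes_\Bbbk \cdots \otimes_\Bbbk F^{(N)}_\bullet$, viewed as a complex of free multigraded $S$-modules through the inclusions $S_r \hookrightarrow S$. Since the tensor product is taken over the field $\Bbbk$, the Künneth formula for complexes shows that the tensored complex is still acyclic in positive homological degrees and resolves the product; minimality is inherited because each $F^{(r)}_\bullet$ has differentials with entries in the graded maximal ideal of $S_r \subseteq S$. Reading off the multigraded rank in homological degree $i$ and multidegree $\bold{m}$ of total degree $j$, then summing over all such $\bold{m}$, gives precisely \eqref{betti numbers of connected components}.

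For \eqref{top betti numbers of connected components}, the key observation is that by Remark~\ref{r:top-betti} we have $b_{u_r,v_r}(S_r/I_r) = 0$ whenever $v_r > q_r$. Thus in the sum from \eqref{betti numbers of connected components} with $j = q_1+\cdots+q_N$, every nonzero term must satisfy $v_r \leq q_r$ for all $r$; combined with $v_1+\cdots+v_N = q_1+\cdots+q_N$, this forces $v_r = q_r$ for every $r$, so only the single partition $(q_1,\ldots,q_N)$ contributes.

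The main obstacle is verifying that the tensor product of the individual minimal resolutions remains a \emph{minimal multigraded free} resolution after the scalar extension from each $S_r$ to the ambient $S$. This amounts to routine bookkeeping with the Künneth formula and the fact that minimality is detected by the differentials having entries in the graded maximal ideal, but care is required to track the $\mathbb{N}^n$-multigrading consistently across the extensions.
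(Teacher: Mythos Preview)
Your argument is correct. For \eqref{top betti numbers of connected components} you and the paper give essentially the same pigeonhole reasoning: any partition $(v_1,\ldots,v_N)$ summing to $q_1+\cdots+q_N$ with some $v_\ell\neq q_\ell$ forces some $v_j>q_j$, killing that term by Remark~\ref{r:top-betti}.

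For \eqref{betti numbers of connected components} the paper takes a shortcut: it quotes the $N=2$ case as Corollary~2.2 of \cite{jacques katzman forests} and then inducts on $N$. You instead supply a direct, self-contained argument via the K\"unneth decomposition $S/(I_1+\cdots+I_N)\cong\bigotimes_\Bbbk(S_r/I_r)\otimes_\Bbbk S'$ and the tensor product of the individual minimal free resolutions. This is in fact the standard way such a formula is established (and is presumably what underlies the cited corollary), so your route is more informative though longer. The only point worth tightening is the passage from $S_r$-resolutions to an $S$-resolution: you should make explicit that tensoring with $S'$ (and the other $S_{r'}$) is flat, so exactness and minimality survive and the graded Betti numbers of $S_r/I_r$ over $S_r$ agree with those over $S$; the paper's Remark following Theorem~\ref{betti number by homology} is exactly the license for this identification.
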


   \begin{proof} The case $N=2$ of 
    Equation~(\ref{betti numbers of connected components}) is
    Corollary~$2.2$ of~\cite{jacques katzman forests}, and the general
    case follows from an easy induction on $N$. To see (\ref{top betti
      numbers of connected components}), note that we have
     $$b_{i,q_1+\cdots+q_N}\left(S/(I_1+I_2+\cdots+I_N)\right)=
    \sum_{\substack{u_1+\cdots+u_N=i \\ v_1+\cdots+v_N=q_1+\cdots+q_N}}
    b_{u_1,v_1}(S/I_1)\cdots b_{u_N,v_N}(S/I_N)$$ by Equation~(\ref{betti
      numbers of connected components}). Suppose that
    $v_1+\cdots+v_N=q_1+\cdots+q_N$. If $v_{\ell} \neq q_{\ell}$ for some
    $\ell$, then there exists a $j$ such that $v_j > q_j$ whence
    $b_{u_j,v_j}(S/I_j)=0$ since $b_{u_j,q_j}$ is a top grade Betti
    number. In this case the term $
    b_{u_1,v_1}(S/I_1)\cdots b_{u_N,v_N}(S/I_N)$ vanishes. So we can rewrite
    the sum above as
$$ \sum_{\substack{u_1+\cdots+u_N=i  \\ v_1=q_1,\ldots,v_N=q_N}} b_{u_1,v_1}(S/I_1)\cdots b_{u_N,v_N}(S/I_N)$$
and this completes the proof. 
\end{proof}

We will make use of the following results on simplicial trees.

\begin{lemma}[\cite{ha van tuyl},~\cite{the facet ideal of a simplicial complex}]\label{reduced connected component is forest} Let $F$ be a facet of a forest $\Gamma$.
 Then $\overline{\operatorname{conn}}_{\Gamma}(F)$ is a forest.
\end{lemma}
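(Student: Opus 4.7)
The plan is to verify the defining property of a forest directly: every nonempty subcollection of $\overline{\operatorname{conn}}_{\Gamma}(F)$ should have a leaf. I would first reduce to the case where $\Gamma$ itself is a tree, since $\overline{\operatorname{conn}}_{\Gamma}(F)$ depends only on the connected component of $F$ in $\Gamma$, which is a tree by hypothesis.

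Let $\Delta$ be a nonempty subcollection of $\overline{\operatorname{conn}}_{\Gamma}(F)$ with facets $G_1,\ldots,G_k$. By the definition of $\overline{\operatorname{conn}}_{\Gamma}(F)$, for each $j$ I can pick a facet $H_j\neq F$ of $\Gamma$ with $G_j=H_j\setminus F$, and the $H_j$'s can be chosen pairwise distinct since the $G_j$'s are. Now form the subcollection $\Sigma=\langle H_1,\ldots,H_k\rangle$ of $\Gamma$. Since subcollections of forests are forests, $\Sigma$ has a leaf, say $H_j$. If $k=1$, then $G_1$ is the only facet of $\Delta$ and is automatically a leaf; otherwise, there exists an index $\ell\neq j$ with $H_j\cap H_m\subseteq H_\ell$ for every $m\neq j$. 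Intersecting each side with the complement of $F$ yields
$$ G_j\cap G_m=(H_j\setminus F)\cap(H_m\setminus F)\subseteq H_\ell\setminus F=G_\ell, $$
and since $G_\ell\neq G_j$, this is exactly the statement that $G_j$ is a leaf of $\Delta$ with branch $G_\ell$.

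The main obstacle I anticipate is controlling how the leaf property interacts with the operation $X\mapsto X\setminus F$. The crucial move is to leave $F$ itself out of the auxiliary collection $\Sigma$: otherwise the leaf of $\Sigma$ could be $F$, whose characterizing inequality $F\cap H_m\subseteq H_\ell$ carries no information about the intersections $G_j\cap G_m$ that we need to bound. Keeping $F$ out of $\Sigma$ forces the pulled-back leaf to be some $H_j$, whose branch condition transfers cleanly under $\setminus F$ to yield the desired leaf of $\Delta$.
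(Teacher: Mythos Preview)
Your argument is correct. Note, however, that the paper itself does not supply a proof of this lemma: it is quoted from \cite{ha van tuyl} and \cite{the facet ideal of a simplicial complex}, so there is no in-paper argument to compare against. Your direct verification---lifting the facets $G_j=H_j\setminus F$ of an arbitrary subcollection $\Delta$ back to a subcollection $\Sigma=\langle H_1,\ldots,H_k\rangle$ of the ambient forest, extracting a leaf $H_j$ of $\Sigma$, and observing that the branch inequality $H_j\cap H_m\subseteq H_\ell$ is preserved under $X\mapsto X\setminus F$ via the identity $(H_j\setminus F)\cap(H_m\setminus F)=(H_j\cap H_m)\setminus F$---is essentially the proof given in the cited sources, and it is sound as written. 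Your remark that $F$ must be left out of $\Sigma$ is the key point and is handled correctly.
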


If $\Gamma$ is a simplicial tree, one can order its facets as
$F_0,F_1,\ldots,F_q$ so that each facet $F_i$ is a leaf of the simplicial
tree $\Gamma_{i}=\langle F_0,\ldots,F_i \rangle$ for $0 \leq i \leq q$.
In~\cite{a good leaf order on simplicial trees}, based on such an
order, a refinement of the recursive formula for graded Betti numbers
of simplicial forests \cite[Theorem 5.8]{ha van tuyl} of H\`{a} and
Van Tuyl was given.

\begin{theorem}[Proposition 4.9,~\cite{a good leaf order on simplicial trees}]\label{refined recursive formula}
 Let $\Gamma$ be a simplicial tree whose facets $F_0,F_1,\ldots,F_q$ are
 ordered such that each facet $F_i$ is a leaf of the simplicial tree
 $\Gamma_{i}=\langle F_0,\ldots,F_i \rangle$ for $0 \leq i \leq q$. Then
 for all $i\geq 1$ and $j \geq 0$
$$b_{i,j}(\mathcal{F}(\Gamma))= b_{i,j}(\mathcal{F}(\langle F_0
 \rangle))+ \sum_{u=1}^q b_{i-1,j-\lvert F_u
   \rvert}\big(\mathcal{F}(\overline{\operatorname{conn}}_{\Gamma_u}(F_u))\big) $$
 where we adopt the convention that $b_{-1,j}(I)$ is $1$ if $j=0$ and
 is $0$ otherwise for any ideal $I$.
\end{theorem}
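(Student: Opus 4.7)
The plan is to proceed by induction on $q$, the number of facets of $\Gamma$ minus one. The base case $q = 0$ is trivial: $\Gamma = \langle F_0 \rangle$, the sum on the right-hand side is empty, and the claim reduces to a tautology.

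For the inductive step, it suffices to establish the one-step recursion
$$b_{i,j}(\mathcal{F}(\Gamma_q)) = b_{i,j}(\mathcal{F}(\Gamma_{q-1})) + b_{i-1,\, j-|F_q|}\bigl(\mathcal{F}(\overline{\operatorname{conn}}_{\Gamma_q}(F_q))\bigr),$$
because the induction hypothesis then applies to $\Gamma_{q-1}$, whose leaf order $F_0,\ldots,F_{q-1}$ inherits the required property, and expands $b_{i,j}(\mathcal{F}(\Gamma_{q-1}))$ into the telescoping sum. To derive the one-step recursion, I would consider the short exact sequence of multigraded $S$-modules
$$0 \to \bigl(S/(\mathcal{F}(\Gamma_{q-1}) : m_{F_q})\bigr)(-|F_q|) \xrightarrow{\cdot m_{F_q}} S/\mathcal{F}(\Gamma_{q-1}) \to S/\mathcal{F}(\Gamma_q) \to 0,$$
where $m_{F_q}$ is the monomial of the leaf $F_q$. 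The key combinatorial identification is $(\mathcal{F}(\Gamma_{q-1}) : m_{F_q}) = \mathcal{F}(\overline{\operatorname{conn}}_{\Gamma_q}(F_q))$: the minimal generators $\prod_{x \in F_i \setminus F_q} x$ of the colon ideal match the facet-monomials of the reduced connected component, and Lemma \ref{reduced connected component is forest} ensures this smaller complex is itself a forest, keeping the induction within our class.

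The main obstacle is controlling the long exact sequence of multigraded Tor induced by the displayed short exact sequence: one must verify that the connecting homomorphism vanishes so that the ranks split additively into the desired recursion. I would do this multidegree by multidegree, exploiting the shift by $-|F_q|$ to separate contributions coming from $\Gamma_{q-1}$ from those coming from the colon ideal, and using the leaf property of $F_q$ in $\Gamma_q$ to force a degree-mismatch that makes at most one of the three relevant Tor groups nonzero in each graded piece. This is essentially the argument of H\`a and Van Tuyl \cite{ha van tuyl} specialised to the leaf order of \cite{a good leaf order on simplicial trees}; once the one-step recursion is verified in this way, the induction closes the proof.
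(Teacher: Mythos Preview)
The paper does not supply its own proof of this statement: it is quoted verbatim as Proposition~4.9 of \cite{a good leaf order on simplicial trees} and used as a black box in the proof of the main theorem. So there is no in-paper argument to compare your proposal against.

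That said, your sketch is the standard route and is essentially how the cited result is obtained. The telescoping induction on $q$ is correct, and the identification of the colon ideal with the facet ideal of the reduced connected component is exactly right once one notes that each $\Gamma_u$ is itself a tree (this is part of the hypothesis), so that $\operatorname{conn}_{\Gamma_u}(F_u)=\Gamma_u$ and the minimal generators of $(\mathcal{F}(\Gamma_{u-1}):m_{F_u})$ are precisely the monomials of the minimal sets $F_i\setminus F_u$, i.e., the facets of $\overline{\operatorname{conn}}_{\Gamma_u}(F_u)$.

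The one place where your proposal is a sketch rather than a proof is the vanishing of the connecting maps in the long exact Tor sequence. Your heuristic (``degree-mismatch forces at most one Tor group to be nonzero in each multidegree'') is not literally how this goes: the actual mechanism is that the pair $(\mathcal{F}(\Gamma_{q-1}),\ (m_{F_q}))$ is a \emph{Betti splitting} in the sense of Eliahou--Kervaire/Francisco--H\`a--Van~Tuyl, which for facet ideals of simplicial trees is verified in \cite{ha van tuyl} using the leaf condition. Citing that result (as you do) is fine, but if you intend to reprove it you should expect to argue via the splittable-ideal criterion rather than a bare multidegree count. With that caveat, your plan is sound.
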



We now prove the main result of this paper.

\begin{theorem} Let $\Gamma$ be a simplicial forest. Then multigraded Betti numbers 
of $\mathcal{F}(\Gamma)$ are either $0$ or $1$. Moreover, if for some
monomial $m$ we have $b_{i,m}(\mathcal{F}(\Gamma))\neq 0$, then
$b_{h,m}(\mathcal{F}(\Gamma))= 0$ for all $h \neq i$.
\end{theorem}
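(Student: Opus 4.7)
My proof plan proceeds by strong induction on the number of facets of $\Gamma$. The base case, when $\Gamma$ has a single facet $F$, is immediate from Remark~\ref{monomial ideal generated by single element}: the principal ideal $\mathcal{F}(\Gamma)=(\prod_{v\in F}v)$ has exactly one nonzero multigraded Betti number, namely $b_{0,\prod_{v\in F}v}=1$.

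For the inductive step, I first reduce from arbitrary multidegrees to the top grade. Given a squarefree monomial $m$ of degree $j$, Lemma~\ref{multigraded betti number and induced subgraph} gives $b_{i,m}(\mathcal{F}(\Gamma))=b_{i,j}(\mathcal{F}(\Gamma_m))$. If $\operatorname{supp}(m)$ strictly contains $\mathcal{V}(\Gamma_m)$, this quantity vanishes since $j$ exceeds the top grade of $\Gamma_m$; otherwise $\operatorname{supp}(m)=\mathcal{V}(\Gamma_m)$ and it is the top grade Betti number of the forest $\Gamma_m$. Since any subcollection of a forest is a forest, whenever $\Gamma_m\neq\Gamma$ the number of facets strictly drops and induction applies. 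The remaining case is $\Gamma_m=\Gamma$ with $m$ equal to the product of all vertices of $\Gamma$, reducing me to the top grade Betti number $b_{i,n}(\mathcal{F}(\Gamma))$.

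The next reduction, from forests to trees, uses Equation~(\ref{top betti numbers of connected components}) of Lemma~\ref{lemma betti numbers of connected components}. Writing $\Gamma$ as a disjoint union of tree components $\Gamma_1,\ldots,\Gamma_N$ with $N\geq 2$, the top grade Betti numbers of $S/\mathcal{F}(\Gamma)$ factor as a single sum of products of top grade Betti numbers of the $S/\mathcal{F}(\Gamma_r)$. Each $\Gamma_r$ is a tree with strictly fewer facets, so by induction each factor is $0$ or $1$ and is nonzero in at most one homological degree. Therefore at most one product in the sum survives, its total homological index is uniquely determined, and the standard relation $b_{i,j}(I)=b_{i+1,j}(S/I)$ transports both properties back to $\mathcal{F}(\Gamma)$.

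The heart of the argument is the tree case. Let $\Gamma$ be a tree with $q+1\geq 2$ facets and fix a good leaf order $F_0,\ldots,F_q$. Theorem~\ref{refined recursive formula} at $j=n$ gives
$$b_{i,n}(\mathcal{F}(\Gamma))=b_{i,n}(\mathcal{F}(\langle F_0\rangle))+\sum_{u=1}^{q}b_{i-1,\,n-|F_u|}\bigl(\mathcal{F}(\overline{\operatorname{conn}}_{\Gamma_u}(F_u))\bigr),$$
and the first summand vanishes since $\mathcal{F}(\langle F_0\rangle)$ is principal and $|F_0|<n$ by incomparability of facets. The main obstacle, and the crucial observation, is that \emph{only the term $u=q$ can contribute to the sum}. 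Since $F_q$ is a leaf of $\Gamma_q=\Gamma$, it contains a free vertex $v$ lying in no other facet of $\Gamma$, so $v\notin\mathcal{V}(\Gamma_u)$ for every $u<q$; this forces $|\mathcal{V}(\overline{\operatorname{conn}}_{\Gamma_u}(F_u))|\leq|\mathcal{V}(\Gamma_u)|-|F_u|<n-|F_u|$, and Remark~\ref{r:top-betti} makes the corresponding Betti number vanish. For $u=q$, the connectedness of $\Gamma$ gives $|\mathcal{V}(\overline{\operatorname{conn}}_{\Gamma}(F_q))|=n-|F_q|$ exactly, so the surviving term is a top grade Betti number of the forest $\overline{\operatorname{conn}}_{\Gamma}(F_q)$ (Lemma~\ref{reduced connected component is forest}), which has strictly fewer facets than $\Gamma$; induction then concludes the proof.
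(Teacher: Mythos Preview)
Your argument follows the paper's proof almost verbatim: reduce to the top grade via Lemma~\ref{multigraded betti number and induced subgraph}, split off connected components via Lemma~\ref{lemma betti numbers of connected components}, and in the tree case use Theorem~\ref{refined recursive formula} together with the free vertex of $F_q$ to kill every summand with $u<q$. The only structural difference is that you induct on the number of facets while the paper inducts on the number of vertices; both work.

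There is, however, one genuine gap in your final paragraph. You assert that connectedness of $\Gamma$ forces $|\mathcal{V}(\overline{\operatorname{conn}}_\Gamma(F_q))|=n-|F_q|$ exactly. This is false. The facets of $\overline{\operatorname{conn}}_\Gamma(F_q)$ are only the \emph{minimal} sets among $F_0\setminus F_q,\ldots,F_{q-1}\setminus F_q$, so vertices lying exclusively in a non-minimal $F_i\setminus F_q$ are discarded. For a concrete tree where this happens, take $F_0=\{1,4\}$, $F_1=\{4,5\}$, $F_2=\{1,2,3\}$ with the good leaf order $F_0,F_1,F_2$: then $F_0\setminus F_2=\{4\}\subsetneq\{4,5\}=F_1\setminus F_2$, so $\overline{\operatorname{conn}}_\Gamma(F_2)=\langle\{4\}\rangle$ has one vertex, while $n-|F_2|=2$.

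The fix is exactly what the paper does: note that $|\mathcal{V}(\overline{\operatorname{conn}}_\Gamma(F_q))|\leq n-|F_q|$, and if the inequality is strict then $b_{i-1,\,n-|F_q|}\bigl(\mathcal{F}(\overline{\operatorname{conn}}_\Gamma(F_q))\bigr)=0$ by Remark~\ref{r:top-betti}, so there is nothing to prove; otherwise equality holds and your inductive step goes through as written.
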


     \begin{proof} We prove the given statements by induction on the number 
      of vertices of $\Gamma$. The cases when $\Gamma$ has only one
      vertex or $i = 0$ are  clear by Remark~\ref{monomial ideal generated by
        single element}. Suppose that the given statements hold for
      any simplicial forest whose number of vertices is $s$ or
      less. Now let $\Gamma$ be a simplicial forest on $s+1$ vertices
      and take a monomial $m$ which divides the product of vertices of
      $\Gamma$. Then the induced subcollection $\Gamma_m$ is also a
      forest by definition. Note that we have $b_{i,m}
      (\mathcal{F}(\Gamma)) = b_{i,j} (\mathcal{F}(\Gamma_m))$ by
      Lemma~\ref{multigraded betti number and induced subgraph} where
      $j=\operatorname{deg}(m)$. If $j$ is greater than the number of
      vertices of $\Gamma_m$, then $b_{i,m} (\mathcal{F}(\Gamma)) =
      b_{i,j} (\mathcal{F}(\Gamma_m))=0$ by Remark~\ref{r:top-betti}.
      So we assume $|\mathcal{V}(\Gamma_m)|=j=\operatorname{deg}(m)$.

      If $\Gamma_m$ is not a tree,
      then its connected components $\Upsilon_1,\ldots,\Upsilon_t$
      satisfy the induction hypothesis.

      If $\mathcal{F}(\Gamma_{m})=0$ we have nothing to prove. So we
      assume that $\mathcal{F}(\Gamma_{m})\neq 0$, and using
      Lemma~\ref{lemma betti numbers of connected components} we get
      $$b_{i,j}(\mathcal{F}(\Gamma_{m}))= b_{i+1,j}(S/\mathcal{F}(\Gamma_{m}))= \sum_{\gamma_1+\cdots+\gamma_t=i+1}
      b_{\gamma_1,l_1}(S/\mathcal{F}(\Upsilon_1))\dots b_{\gamma_t,l_t}
      (S/\mathcal{F}(\Upsilon_t))$$ where $l_v$ is the number of
      vertices of $\Upsilon_v$ for each $1\leq v \leq t$. As each connected component has at least one vertex, $\mathcal{F}(\Upsilon_v)\neq 0$ for each $v$. By induction hypothesis for each $l_v$ there exists at most one $\gamma_v$ such that $b_{\gamma_v,l_v}(\mathcal{F}(\Upsilon_v))\neq 0$. Therefore for each $l_v$ there exists at most one $\gamma_v$ such that $b_{\gamma_v,l_v}(S/\mathcal{F}(\Upsilon_v))\neq 0$.

Hence we see that there must be at most one $i$ such that
      $b_{i,j}(\mathcal{F}(\Gamma_{m}))\neq 0$. And, in such a case
      $$b_{i,j}(\mathcal{F}(\Gamma_{m}))=
      \prod_{v=1}^tb_{\gamma_v,l_v}(S/\mathcal{F}(\Upsilon_v))= \prod_{v=1}^t b_{\gamma_v-1,l_v}(\mathcal{F}(\Upsilon_v))=\prod_{v=1}^t
      1=1$$ as desired. Therefore we assume that $\Gamma_{m}$ is a tree
      and $j=|\mathcal{V}(\Gamma_{m})|$.

      Suppose that the facets $F_0,F_1,\ldots,F_q$ of $\Gamma_m$ are
      ordered as in Theorem~\ref{refined recursive formula}.  Then we
      have $j=\lvert \cup _{r=0}^{q}F_r\rvert$ as $\Gamma_m$ is a
      simplicial complex on $j$ vertices. Now we have
      \begin{equation}\label{equation in proof of multigraded simplicial forest}
        b_{i,j} (\mathcal{F}(\Gamma_m)) = b_{i,j}(\mathcal{F}(\langle
        F_0 \rangle))+ \sum_{u=1}^q b_{i-1,j-\lvert F_u
          \rvert}\big(\mathcal{F}(\overline{\operatorname{conn}}_{(\Gamma_m)_u}(F_u))\big)
      \end{equation}
      by Theorem~\ref{refined recursive formula}. If $F_0$ is the only
      facet of $\Gamma_m$, then we are done by Remark~\ref{monomial ideal
        generated by single element}. So assume that $q\geq 1$ and
      note that the set of facets of
      $\overline{\operatorname{conn}}_{(\Gamma_m)_u}(F_u)$ is a subset
      of $\{F_0\setminus F_u,\ldots, F_{u-1}\setminus F_u \}$ for every
      $1 \leq u \leq q$.

      Since $F_q$ has a free vertex in $\Gamma_m, \lvert
      \mathcal{V}((\Gamma_m)_u) \rvert < j$ for $u <q$. In particular,
      $\lvert F_0 \rvert < j$ and
      $\lvert\mathcal{V}(\overline{\operatorname{conn}}_{(\Gamma_m)_u}(F_u))\rvert
      < j-\lvert F_u \rvert$ when $u<q$. Hence by
      Remark~\ref{r:top-betti}, Equation~(\ref{equation in proof of
        multigraded simplicial forest}) turns into $$b_{i,m}
      (\mathcal{F}(\Gamma))= b_{i-1, j-\lvert F_q \rvert}
      (\mathcal{F}(\overline{\operatorname{conn}}_{(\Gamma_m)_q}(F_q))). $$
      Observe that by definition of
      $\overline{\operatorname{conn}}_{(\Gamma_m) _q}(F_q) $ some of
      $F_0 \setminus F_q,\ldots,F_{q-1} \setminus F_q$ might have already
      been omitted when forming the facet set of
      $\overline{\operatorname{conn}}_{(\Gamma_m) _q}(F_q)$. So, $ j -
      \lvert F_q \rvert $ is greater than or equal to the number of
      vertices of $\overline{\operatorname{conn}}_{(\Gamma_m)
        _q}(F_q)$. If it is greater, then
       $$b_{i,m} (\mathcal{F}(\Gamma))= b_{i-1, j-\lvert F_q
        \rvert}(\mathcal{F}(\overline{\operatorname{conn}}_{(\Gamma_m)_q}(F_q)))=0 $$
      and nothing is left to prove. Otherwise,
      $\overline{\operatorname{conn}}_{(\Gamma_m)_q}(F_q)$ is a
      simplicial forest on $j-\lvert F_q \rvert$ vertices by
      Lemma~\ref{reduced connected component is forest}. Since $j \leq
      s+1$, $\overline{\operatorname{conn}}_{(\Gamma_m)_q}(F_q)$
      satisfies the induction hypothesis. The proof follows by
      observing that $b_{i-1, j-\lvert F_q
        \rvert}(\mathcal{F}(\overline{\operatorname{conn}}_{(\Gamma_m)_q}(F_q)))$
      is also a multigraded Betti number.

\end{proof}

\end{document}